\documentclass[12pt]{amsart}
\usepackage{geometry}       
\geometry{letterpaper}                   
\usepackage{graphicx}
\usepackage{amsmath,amscd,amssymb,dsfont,stmaryrd}
\usepackage{amssymb}
\usepackage{dsfont}
\usepackage{epstopdf}
\usepackage{mathrsfs}
\usepackage{hyperref}
\usepackage{enumerate}
\usepackage{graphicx}
\usepackage{float}
\usepackage{tikz}
\usetikzlibrary{matrix,arrows}
\DeclareGraphicsRule{.tif}{png}{.png}{`convert #1 `dirname #1`/`basename #1 .tif`.png}


\newcommand{\bb}[1]{\mathbb{#1}}

\newcommand{\cal}[1]{\mathcal{#1}}

\newcommand{\raw}{\rightarrow}

\newcommand{\inv}{^{-1}}
\DeclareMathOperator{\Hom}{Hom}
\DeclareMathOperator{\GL}{GL}
\DeclareMathOperator{\SL}{SL}
\DeclareMathOperator{\Sp}{Sp}

\DeclareMathOperator{\gln}{\GL_n\!\bb{C}}

\DeclareMathOperator{\SU}{SU}

\DeclareMathOperator{\rank}{rank}

\newcommand\G{\Gamma}
\newcommand\Gi{\G_{(i)}}
\newcommand\Gs{\G_{(s)}}
\newcommand\HGG{\Hom(\G,G)}
\newcommand\HGK{\Hom(\G,K)}
\newcommand\HGGmG{\HGG\!\sslash\! G}
\newcommand\one{\mathds{1}}
\newcommand\CC{\mathbb{C}}
\newcommand\ZZ{\mathbb{Z}}

\newtheorem*{namedtheorem}{\theoremname}
\newcommand{\theoremname}{testing}
\newenvironment{named}[1]{\renewcommand{\theoremname}{#1}\begin{namedtheorem}}{\end{namedtheorem}}

\theoremstyle{plain}
\newtheorem{thm}{Theorem}[section]
\newtheorem{lem}[thm]{Lemma}
\newtheorem{prop}[thm]{Proposition}
\newtheorem{cor}[thm]{Corollary}

\theoremstyle{definition}

\newtheorem*{rem}{Remark}

\title{A note on nilpotent representations}
\author{Maxime Bergeron}
\address{Department of Mathematics, The University of British Columbia, Room 121 - 1984 Mathematics Road, V6T 1Z2,  Vancouver BC, Canada}
\email{mbergeron@math.ubc.ca}
\urladdr{http://www.math.ubc.ca/~mbergeron}
\thanks{M.B.\ was supported by an NSERC Alexander Graham Bell CGS-D Scholarship}
\author{Lior Silberman}
\email{lior@math.ubc.ca}
\urladdr{http://www.math.ubc.ca/~lior/}
\thanks{L.S.\ was partly supported by an NSERC Discovery Grant}

\begin{document}

\begin{abstract}
Let $\Gamma$ be a finitely generated nilpotent group and let $G$ be a
complex reductive algebraic group.  The representation variety $\HGG$ and
the character variety $\HGGmG$ each carry a natural topology,
and we describe the topology of their connected components in terms of
representations factoring through quotients of $\Gamma$ by elements of its
lower central series.
\end{abstract}

\subjclass[2010]{Primary 55P99; Secondary 20F18, 20C99, 20G20}
\keywords{Representation variety, nilpotent groups, central series}
\maketitle

\section{Introduction}
Let $G$ be the group of complex points of an affine algebraic group.
When $\Gamma$ is a finitely generated group, one may parametrize the
homomorphisms from $\Gamma$ to $G$ by the images of a finite generating set.
This realizes $\HGG$ as an (affine) algebraic set, carved out of
a finite product of copies of $G$ by the relations of $\Gamma$.
As a complex variety, $\HGG$ admits a natural Hausdorff topology
obtained from an embedding into affine space and it is easy to see (and
well-known) that the analytic space structure on $\HGG$ is independent of the
chosen presentation of $\Gamma$.  Here, we will only consider the case where $G$ is reductive though, in principle, the questions we address below can be asked
without this assumption.

These spaces of homomorphisms are of classical interest
(see Lubotzky--Magid  \cite{LubotzkyMagid:RepresentationVarieties} and the
references therein) and their algebraic topology has been the subject of
much recent scrutiny (see, for instance, \cite{AdemGomez:Structure,
AdemGomez:Classifying_preprint, AdemGomezLindTillman:Infinite_preprint,  
Baird:CohomSpaceTuples, CohenStafa:SpacesCommuting_preprint, CohenStafa:Survey,
GomezPettetSouto:Fundamental}), stemming in part from the  work of
\'Adem and Cohen \cite{AdemCohen:Commuting}.
In this context, it was recently shown by the first named author
\cite{Bergeron:TopNilRepns} that if $\Gamma$ is nilpotent and $K$ is a
maximal compact subgroup of $G$, then there is a strong deformation
retraction of $\HGG$ onto $\HGK$. This result was first established by
homotopy-theoretic methods for $\Gamma$ abelian by Pettet and Souto
\cite{PettetSouto:CommutingTuples} and for $\Gamma$ expanding nilpotent by Souto and
the second named author.  The result for arbitrary nilpotent groups was obtained
in \cite{Bergeron:TopNilRepns} by replacing these earlier approaches
with algebro-geometric methods.  Nevertheless, the machinery developed
by Pettet--Souto and its followups is very well posed to the study of
topological invariants.  Accordingly, the goal of this note is to combine
these topological and algebro-geometric tools to obtain topological information
about representation spaces of nilpotent groups.

From now on, fix a non-abelian finitely generated $s$-step nilpotent group
$\G$.  Recall that this means that the lower central series, defined inductively
by
$$
  \G_{(1)} = \G,\quad\quad\G_{(i+1)} = [\G,\Gi]
$$
has $\Gs$ non-trivial but $\G_{(s+1)} = \{e\}$.
The epimorphism $\G\raw\G/\Gi$ induces an embedding
$$
  \Hom(\G/\Gi,G)\raw\HGG
$$
which (for general groups $\G$ and $G$) is not even an open map.
Nevertheless, we will show:

\begin{thm}\label{thm:induction}
Let $\G$ be a finitely generated nilpotent group and let $G$ be the group of complex
points of a (possibly disconnected) reductive algebraic group.
For all $i\geq 2$, the inclusion 
$$
  \Hom(\G/\Gi,G)\xrightarrow{\iota} \HGG
$$
is a homotopy equivalence onto the union of those components of the target
intersecting the image of $\iota$.
\end{thm}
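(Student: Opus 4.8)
The plan is to realize $\Hom(\G/\Gi,G)$ as a strong deformation retract of the relevant part of $\HGG$. First note that $\iota$ is a \emph{closed} embedding: its image is $\{\rho\in\HGG:\rho(w)=e\text{ for all }w\text{ in a fixed finite generating set of }\Gi\}$, a Zariski-closed subset. A closed embedding of (triangulable) algebraic sets is a cofibration, so Theorem~\ref{thm:induction} is equivalent to the assertion that $R_i:=\iota\bigl(\Hom(\G/\Gi,G)\bigr)$ is a strong deformation retract of the open-and-closed subset $X\subseteq\HGG$ given by the union of the components meeting $R_i$. Next I would reduce to the extreme case $i=s$ (so that $\Gs$ is central): since $(\G/\G_{(j+1)})_{(j)}=\G_{(j)}/\G_{(j+1)}$ is central in $\G/\G_{(j+1)}$ and is the bottom term of its lower central series, the case $i=s$ applied to the successive quotients $\G/\Gs,\ \G/\G_{(s-1)},\dots$ produces a chain of closed inclusions
$$
  \Hom(\G/\Gi,G)\hookrightarrow\Hom(\G/\G_{(i+1)},G)\hookrightarrow\cdots\hookrightarrow\HGG,
$$
each a homotopy equivalence onto the components it meets; an elementary point-set argument shows that this property is stable under composition, so it suffices to prove: for every finitely generated nilpotent $\G$, writing $Z:=\Gs$ for its (central) bottom term, $\Hom(\G/Z,G)\hookrightarrow\HGG$ is a homotopy equivalence onto the union of the components it meets.

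The second step is to pass to a maximal compact subgroup $K\le G$. By the retraction theorem of \cite{Bergeron:TopNilRepns} there is a strong deformation retraction of $\HGG$ onto $\HGK$, and likewise of $\Hom(\G/Z,G)$ onto $\Hom(\G/Z,K)$; since this retraction can be taken natural in the source group, the one for $\G$ restricts on the closed subvariety $\{\rho:\rho(Z)=\{e\}\}$ to the one for $\G/Z$. A standard gluing argument then reduces the theorem to the compact statement that $\Hom(\G/Z,K)$ is a strong deformation retract of the union $X_K$ of those components of $\HGK$ which it meets. (Pinning down the precise naturality of the retraction of \cite{Bergeron:TopNilRepns} is one technical point to be checked.)

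It remains to build the deformation retraction in the compact case, which is the heart of the matter. For $\rho\in\HGK$ the Zariski closure $H:=\overline{\rho(\G)}$ is a compact nilpotent Lie group, hence its identity component $T_\rho$ is a torus with $H/T_\rho$ finite. As the trivial representation of the finite group $Z_{\mathrm{tors}}$ is isolated in $\Hom(Z_{\mathrm{tors}},K)$ and every component of $X_K$ meets $\Hom(\G/Z,K)$, every $\rho\in X_K$ already kills $Z_{\mathrm{tors}}$; so the content is to \emph{continuously unwind} the homomorphism $\rho|_{Z}\colon Z/Z_{\mathrm{tors}}\cong\ZZ^k\to T_\rho$ (note $\rho(Z)$ is central in $H$ since $Z$ is central in $\G$). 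Morally one wants to scale the generators of $\rho(Z)$ to the identity along a path in $T_\rho$, dragging the rest of $\rho$ along; the obstruction to doing this canonically is that a torus carries no single-valued logarithm. The approach I would take is to study the restriction map $r\colon\HGK\raw\Hom(Z,K)$: its fibre over the trivial representation is $\Hom(\G/Z,K)$, while over $X_K$ the image of $r$ is a neighbourhood of the trivial representation made up of the ``small'' homomorphisms $Z\raw K$ that arise as values of commutator words, and one argues that it deformation retracts onto the trivial representation; together with showing that $r$ is, over this locus, a fibration --- ideally a fibre bundle compatible with the conjugation action of $K$ --- this produces a fibrewise strong deformation retraction of $X_K$ onto $\Hom(\G/Z,K)$. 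I expect the crux of the whole argument to be exactly this last step: promoting the easy pointwise observation that each $\rho\in X_K$ can be joined by a path to $\Hom(\G/Z,K)$ into a single \emph{continuous} strong deformation retraction valid over all of $X_K$. This is where the maximal-torus and Weyl-group techniques for spaces of commuting tuples developed in the works cited in the introduction would have to be brought to bear, and where I anticipate the real work lies; a secondary difficulty is reconciling the behaviour of the finite groups $Z_{\mathrm{tors}}$ and $H/T_\rho$ with the torus scaling.
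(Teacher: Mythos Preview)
Your overall architecture---induct down to the case $i=s$, pass to a maximal compact $K$, then handle the inclusion $\Hom(\G/\Gs,K)\hookrightarrow\HGK$---matches the paper's. The gap is in the last step, and it is exactly the step you flag as ``the real work'': you are trying to build a genuine deformation that unwinds $\rho|_{\Gs}$ inside a torus, and you are right to suspect this is delicate, because $\rho|_{\Gs}$ is \emph{determined} by commutators of the values of $\rho$ on generators of $\G$ and $\G_{(s-1)}$, so the restriction map $r$ to $\Hom(\Gs,K)$ cannot be a fibration in any useful sense---you cannot move $\rho|_{\Gs}$ without moving the rest of $\rho$ in a constrained way.

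The idea you are missing dissolves this difficulty entirely: for every $\rho\in\HGK$ the image $\rho(\Gs)$ is a \emph{finite} abelian subgroup of $K$, of order bounded by a constant depending only on an embedding $K\hookrightarrow\SU_m$. The reason is that $\Gs\subset[\G,\G]$ (since $s\geq2$), so $\rho(\Gs)$ lies in the kernel of each block determinant for the isotypic decomposition of $\CC^m$ under the central subgroup $\rho(\Gs)$; this forces every eigenvalue of every element of $\rho(\Gs)$ to be an $m$th root of unity. Consequently the set $\mathcal F$ of possible images $\rho(\Gs)$ consists of finitely many $K^o$-conjugacy classes of finite abelian subgroups, the trivial subgroup $\{e_K\}$ is an \emph{isolated point} of $\mathcal F$, and the map $p\colon\HGK\to\mathcal F$, $p(\rho)=\rho(\Gs)$, is a locally trivial bundle. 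Hence $\Hom(\G/\Gs,K)=p^{-1}(\{e_K\})$ is already open and closed in $\HGK$: the compact statement is a \emph{homeomorphism} onto a union of components, and no deformation retraction needs to be constructed at all. Your torus-unwinding plan is therefore both unnecessary and, as written, obstructed; replace it with the finiteness lemma and the proof goes through immediately. The passage from $K$ to $G$ then needs only the retraction theorem applied separately to source and target (the naturality you worry about is genuine but mild, since the retraction of \cite{Bergeron:TopNilRepns} is given by a flow on $G$ independent of $\G$).
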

Consider $\HGG$ as a based space by taking the trivial representation as
the base point. In this case,   Theorem \ref{thm:induction} implies that the connected
components 
$$
  \HGG_\one \subset \HGG \text{ and }
  \Hom(\G/\Gi,G)_\one \subset \Hom(\G/\Gi,G)
$$
of the trivial representation are homotopy equivalent for all $i\geq 2$.
Using this, we will describe the homotopy type of the component of the trivial
representation in terms of abelian representations:
\begin{cor}\label{cor:abelianization}
For $\G$ and $G$ as in \emph{Theorem \ref{thm:induction}},
there is a homotopy equivalence
$$
  \HGG_\one \simeq \Hom(\ZZ^{\rank H_1(\G;\ZZ)},G)_\one.
$$
\end{cor}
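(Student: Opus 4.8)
The plan is to derive the corollary from Theorem~\ref{thm:induction} with $i=2$, together with the elementary remark that, on the component of the trivial representation, torsion in the source group contributes nothing. Since $\G_{(2)}=[\G,\G]$, the quotient $\G/\G_{(2)}$ is the abelianization $H_1(\G;\ZZ)$ of $\G$; set $A:=H_1(\G;\ZZ)$ and $r:=\rank A$, so that $A\cong\ZZ^r\oplus T$ for a finite abelian group $T$. By Theorem~\ref{thm:induction} the inclusion $\iota\colon\Hom(A,G)\raw\HGG$ is a homotopy equivalence onto the union $Y$ of those connected components of $\HGG$ meeting its image. The trivial representation $\one$ of $\G$ factors through $A$, hence lies in the image of $\iota$, so $\HGG_\one$ is one of the components constituting $Y$. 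A homotopy equivalence induces a bijection on path components under which matching components are themselves homotopy equivalent (restrict a homotopy inverse and note that the relevant homotopies move each point within a single path component); since representation varieties, being complex algebraic sets in the analytic topology, are locally path-connected, path and connected components coincide. Hence $\iota$ restricts to a homotopy equivalence from $\Hom(A,G)_\one$ onto $\HGG_\one$, and it remains to prove $\Hom(A,G)_\one=\Hom(\ZZ^r,G)_\one$.

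For this, fix a basis of $\ZZ^r$ and generators $f_1,\dots,f_k$ of cyclic summands $\ZZ/d_1,\dots,\ZZ/d_k$ of $T$; evaluating homomorphisms at these generators realizes $\Hom(A,G)$ as a closed subset of $G^{r+k}$, with $\Hom(\ZZ^r,G)=\Hom(A,G)\cap\bigl(G^r\times\{e\}^k\bigr)$ a closed subspace containing $\one$. The inclusion $\Hom(\ZZ^r,G)_\one\subseteq\Hom(A,G)_\one$ is immediate, the left-hand side being connected and containing $\one$. For the reverse inclusion the key point is that for every $d\geq1$ the identity is an isolated point of $V_d:=\{g\in G:g^d=e\}$: near $0$ the exponential map is a local diffeomorphism onto a neighborhood of $e$ and satisfies $\exp(X)^d=\exp(dX)$, so $\exp(X)\in V_d$ with $X$ small forces $dX=0$ and hence $X=0$; since $\{e\}$ is also closed in $V_d$ it is a connected component of $V_d$, so every path in $V_d$ issuing from $e$ is constant. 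Now let $\rho\in\Hom(A,G)_\one$ and pick a path $\gamma$ in $\Hom(A,G)$ from $\one$ to $\rho$; for each $j$ the map $t\mapsto\gamma(t)(f_j)$ is a path in $V_{d_j}$ starting at $e$, hence is constantly $e$, so $\gamma$ takes values in $\Hom(\ZZ^r,G)$ and $\rho\in\Hom(\ZZ^r,G)_\one$. This yields $\Hom(A,G)_\one=\Hom(\ZZ^r,G)_\one$, and combined with the previous paragraph, $\HGG_\one\simeq\Hom(\ZZ^r,G)_\one$.

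I do not expect a serious obstacle: all the real content is in Theorem~\ref{thm:induction} and in the one-line observation about $V_d$, the remainder being formal bookkeeping with connected components. The only points worth stating carefully are the standard facts that homotopy equivalences restrict to homotopy equivalences between matching components and that these representation spaces are locally path-connected, so that one is free to argue with paths.
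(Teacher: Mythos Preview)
Your argument is correct. The overall strategy matches the paper's: use the induction theorem with $i=2$ to reduce to the abelianization $A=H_1(\Gamma;\ZZ)\cong\ZZ^r\oplus T$, then eliminate the finite torsion $T$ via a no-small-subgroups argument. The organizational difference is that the paper first proves the compact statement $\Hom(\Gamma,K)_\one\cong\Hom(\ZZ^r,K)_\one$ as a \emph{homeomorphism} (using Proposition~\ref{compactinduction} and the phrase ``Lie groups contain no small subgroups'') and only then invokes \cite[Theorem~I]{Bergeron:TopNilRepns} to pass to $G$; you instead apply Theorem~\ref{thm:induction} directly in $G$ and carry out the torsion argument there via the isolated-point observation for $V_d=\{g\in G:g^d=e\}$. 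Your route is a little more streamlined once Theorem~\ref{thm:induction} is available as a black box, while the paper's route yields the sharper intermediate conclusion that in the compact case one has an actual homeomorphism, which is reused later (e.g.\ in the proof of Corollary~\ref{cor:cohomology}).
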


To introduce the other space we study, note first that the action of $G$ on
itself by conjugation induces an action on $\HGG$ and conjugate
homomorphisms are often considered equivalent (this is the usual notion of
equivalence of representations in $\gln$).  Accordingly one often
wishes to understand the associated quotient but,   unfortunately, the naive
topological quotient is not a nice space: it need not even be Hausdorff.
In order to ``repair" this space, we use the affine geometric invariant theory
quotient $\HGG/\!\!/G$ instead.
This so-called \emph{character variety} is usually endowed with the structure
of an affine variety but, for our purposes, it may be constructed topologically
as the universal quotient in the category of Hausdorff spaces
(see Brion--Schwarz \cite{BrionSchwartz:Theorie}).  The systematic study of the
topology of theses spaces has seen much recent development (see, for instance,
\cite{BiswasFlorentino:Character_preprint,
 BiswasLawtonRamras:Fundamental_preprint, FlorentinoLawton:TopModSpace,
 FlorentinoLawton:TopCharVar,  LawtonRamras:Covering_preprint}).
Concentrating on the component of the trivial representation, we will use Corollary \ref{cor:abelianization} to prove:
\begin{cor}\label{cor:fundamental}
Let $\G$ be a finitely generated nilpotent group and let $G$ be the group of complex
points of a reductive algebraic group.  Then
\begin{enumerate}
\item $\pi_1\left(\HGG_\one\right) \cong \pi_1(G)^{\,\rank H_1(\G;\ZZ)},$ and
\item $\pi_1\left(\left(\HGGmG\right)_\one\right) \cong \pi_1(G/[G,G])^{\,\rank H_1(\G;\ZZ)}.$
\end{enumerate}
\end{cor}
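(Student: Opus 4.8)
\emph{Part (1).} Set $r=\rank H_1(\G;\ZZ)$; I would deduce both statements by reducing to $\G=\ZZ^r$. For~(1): by Corollary~\ref{cor:abelianization} it suffices to compute $\pi_1\bigl(\Hom(\ZZ^r,G)_\one\bigr)$. Since the evaluation maps $\Hom(\ZZ^r,G)\raw G$ are continuous and $G/G^\circ$ is discrete, every representation in the component of $\one$ has image in $G^\circ$, so we may assume $G$ connected. The Pettet--Souto strong deformation retraction of $\Hom(\ZZ^r,G)$ onto $\Hom(\ZZ^r,K)$ (\cite{PettetSouto:CommutingTuples}; see also \cite{Bergeron:TopNilRepns}), restricted to the component of $\one$, reduces matters to $\pi_1\bigl(\Hom(\ZZ^r,K)_\one\bigr)$ for a maximal compact $K\subset G$; for $K$ compact connected this is $\pi_1(K)^r$ by G\'omez--Pettet--Souto \cite{GomezPettetSouto:Fundamental}, and $\pi_1(K)\cong\pi_1(G)$ since $K\hookrightarrow G$ is a homotopy equivalence. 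This proves~(1).

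\emph{Part (2).} The first and main step is a character-variety analogue of Theorem~\ref{thm:induction}. The inclusions $\iota$ are $G$-equivariant (being induced by the central quotients $\G\raw\G/\Gi$), so they descend to maps of affine GIT quotients; granting that the deformation retractions underlying Theorem~\ref{thm:induction} can be chosen equivariant---in the compact model $K$-equivariant, as in \cite{Bergeron:TopNilRepns}---so that they pass to orbit spaces, and using the natural homeomorphism $\HGGmG\cong\HGK/K$ (see \cite{FlorentinoLawton:TopModSpace}), the induced map is a homotopy equivalence onto the relevant components. Iterating down to the abelianization (the torsion of $H_1(\G;\ZZ)$ contributing only an isolated point at the base point) then gives a homotopy equivalence
$$
 \bigl(\HGGmG\bigr)_\one\;\simeq\;\bigl(\Hom(\ZZ^r,G)\sslash G\bigr)_\one ,
$$
and by the theory of abelian character varieties (Florentino--Lawton \cite{FlorentinoLawton:TopCharVar}) the right-hand side is isomorphic to $T^r/W$ for a maximal torus $T\subset G$ with Weyl group $W$.

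It remains to compute $\pi_1(T^r/W)$. The compact torus $T_c\subset T$ is a $W$-equivariant deformation retract, so I would write $T_c=\fk t/\Lambda$ with $\Lambda=\pi_1(T_c)$ the cocharacter lattice and $\fk t=\Lambda\otimes\bb R$, whence $T_c^r/W=\fk t^r/(\Lambda^r\rtimes W)$, with $\Lambda^r$ acting by translations and $W$ diagonally via the reflection representation. As $\fk t^r$ is contractible, Armstrong's theorem identifies $\pi_1(T_c^r/W)$ with $(\Lambda^r\rtimes W)/N$, where $N$ is generated by the elements with a fixed point; since $(0,w)$ fixes the origin for all $w$ we get $W\subseteq N$, hence $\pi_1(T_c^r/W)\cong\Lambda^r/(N\cap\Lambda^r)$. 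An element $(\lambda,w)$ has a fixed point iff $\lambda\in\bigl(\mathrm{im}(1-w)\bigr)^r$; in the $W$-invariant splitting $\fk t=\fk z\oplus\fk t_{ss}$ into the pointwise-fixed centre and the coroot span one has $\mathrm{im}(1-w)\subseteq\fk t_{ss}$ for every $w$, while a Coxeter element $c$ has $\mathrm{im}(1-c)=\fk t_{ss}$ exactly (it has no eigenvalue $1$ on $\fk t_{ss}$). Thus $N\cap\Lambda^r=(\Lambda\cap\fk t_{ss})^r$ and $\pi_1(T_c^r/W)\cong(\Lambda/\Lambda_0)^r$ with $\Lambda_0=\Lambda\cap\fk t_{ss}$. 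Finally $T\to G/[G,G]$ is a surjection of tori with connected kernel $T\cap[G,G]$, so $\Lambda_0=\ker\bigl(\pi_1(T)\to\pi_1(G/[G,G])\bigr)$ and $\Lambda/\Lambda_0\cong\pi_1(G/[G,G])$, giving~(2).

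The hard part is the character-variety analogue of Theorem~\ref{thm:induction}: the theorem as stated yields only a homotopy equivalence of spaces, and descending to GIT quotients needs genuine control of the quotient map---either that the retractions of \cite{Bergeron:TopNilRepns} together with the Pettet--Souto construction may be taken equivariant, or a separate proof of the quotient statement. Everything after that is routine or citable; one could also bypass the Coxeter-element computation by quoting \cite{BiswasLawtonRamras:Fundamental_preprint}.
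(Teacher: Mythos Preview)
Your Part~(1) is exactly the paper's argument: reduce to $\ZZ^r$ via Corollary~\ref{cor:abelianization} and quote G\'omez--Pettet--Souto.

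For Part~(2) the paper takes a one-line route: it simply invokes the main result of Biswas--Lawton--Ramras \cite{BiswasLawtonRamras:Fundamental_preprint} after Corollary~\ref{cor:abelianization}. You instead establish a character-variety analogue of the reduction, identify $(\HGGmG)_\one$ with $T^r/W$, and compute $\pi_1(T^r/W)$ by hand via Armstrong's theorem and the Coxeter-element trick. This is correct and is a genuinely different proof. The paper's approach buys brevity; yours buys a self-contained computation that avoids the black box of \cite{BiswasLawtonRamras:Fundamental_preprint} (indeed you note yourself that one could cite BLR instead of the Armstrong calculation).

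Two remarks on execution. First, the equivalence $\HGGmG\simeq\HGK/K$ for nilpotent $\Gamma$ is \cite[Theorem~II]{Bergeron:TopNilRepns}, not Florentino--Lawton (who treat free and surface groups), and it is a strong deformation retraction, not a homeomorphism. Second, the step you flag as ``the hard part'' is easier than you fear: in the compact model, Proposition~\ref{compactinduction} and Corollary~\ref{compactabelianization} give $K$-equivariant \emph{homeomorphisms} (not merely homotopy equivalences) $\Hom(\Gamma,K)_\one\cong\Hom(\ZZ^r,K)_\one$, and these descend to the $K$-orbit spaces automatically---no need to worry about equivariance of homotopies. Combined with Lemma~\ref{connectedquotient} and \cite[Theorem~II]{Bergeron:TopNilRepns}, this yields the character-variety reduction cleanly; the paper uses exactly this chain in its proof of Corollary~\ref{cor:cohomology}.
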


\begin{cor}\label{cor:cohomology}
Let $G$ be the group of complex points of a connected reductive algebraic
group, let $T\subset G$ be a maximal algebraic torus and let $W$ be the Weyl
group of $G$.  If $\G$ is a finitely generated nilpotent group and $F$ is a
field of characteristic $0$ or relatively prime to the order of $W$, then:
\begin{enumerate}
\item $H^{\ast}\left(\HGG_\one;F\right)
              \cong H^{\ast}(G/T \times T^{\,\rank H_1(\G;\ZZ)};F)^W,$ and
\item $H^{\ast}\left(\left(\HGGmG\right)_\one;F\right)
              \cong H^\ast(T^{\,\rank H_1(\G;\ZZ)};F)^W.$
\end{enumerate}
\end{cor}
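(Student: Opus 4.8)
The plan is to reduce both statements to the case of a free abelian group and then invoke known descriptions of spaces of commuting tuples and of character varieties of $\ZZ^n$. Set $n=\rank H_1(\G;\ZZ)$. For part (1), Corollary \ref{cor:abelianization} gives a homotopy equivalence $\HGG_\one\simeq\Hom(\ZZ^n,G)_\one$, so it suffices to treat $\G=\ZZ^n$; there the formula computes the cohomology of the space of commuting $n$-tuples, and I would obtain it by passing to a maximal compact subgroup $K\leq G$. The strong deformation retraction of $\HGG$ onto $\HGK$ (Pettet--Souto \cite{PettetSouto:CommutingTuples}, \cite{Bergeron:TopNilRepns}) restricts, component by component, to a deformation retraction of $\Hom(\ZZ^n,G)_\one$ onto $\Hom(\ZZ^n,K)_\one$, whence $H^\ast(\Hom(\ZZ^n,G)_\one;F)\cong H^\ast(\Hom(\ZZ^n,K)_\one;F)$. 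Baird's theorem \cite{Baird:CohomSpaceTuples}, valid for $F$ of characteristic $0$ or prime to $|W|$, identifies the latter with $H^\ast\bigl((K/T_K)\times T_K^n;F\bigr)^W$, where $T_K=K\cap T$ is a maximal torus of $K$ and $W=N_K(T_K)/T_K=N_G(T)/T$; the underlying map is the conjugation map $(gT_K,t_1,\dots,t_n)\mapsto(gt_1g\inv,\dots,gt_ng\inv)$, which is $W$-invariant for the diagonal action and surjective onto the identity component. It then remains to match the compact and complex models of the target by checking that $T_K^n\hookrightarrow T^n$ and $K/T_K\hookrightarrow G/T$ are $W$-equivariant homotopy equivalences: the first because $(S^1)^r\hookrightarrow(\CC^\ast)^r$ is a $W$-equivariant deformation retract, the second because the projection $G/T\raw G/B$ is a homotopy equivalence (its fibre being the contractible unipotent radical of $B$) while $K/T_K\raw G/B$ is a diffeomorphism (Iwasawa decomposition), the $W$-actions being right translation by representatives chosen in $N_K(T_K)$. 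Taking $W$-invariants of the induced isomorphism of graded $W$-modules in cohomology finishes part (1).

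For part (2) one first needs the analogue of Corollary \ref{cor:abelianization} for character varieties, namely $\left(\HGGmG\right)_\one\cong\left(\Hom(\ZZ^n,G)\sslash G\right)_\one$. Since the maps of Theorem \ref{thm:induction} are induced by epimorphisms of groups, they are $G$-equivariant, and, $G$ being connected, $G$ preserves each connected component; passing to invariants of coordinate rings then shows that the induced maps of affine GIT quotients are closed immersions restricting to homeomorphisms onto the corresponding unions of components. Near the trivial representation every closed $G$-orbit is that of a representation with diagonalizable --- hence toral --- image, so iterating down to $\G/\G_{(2)}$ and discarding the torsion of $H_1$ leaves exactly $\left(\Hom(\ZZ^n,G)\sslash G\right)_\one$. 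This last space is identified with the quotient $T^n/W$ (cf.\ \cite{FlorentinoLawton:TopCharVar}, and, for $n=1$, the Chevalley restriction theorem): the inclusion $T^n\hookrightarrow\Hom(\ZZ^n,G)$ induces a continuous bijection $T^n/W\raw\left(\Hom(\ZZ^n,G)\sslash G\right)_\one$ which is a homeomorphism because the GIT quotient map is closed. Since $|W|$ is invertible in $F$, the transfer isomorphism gives $H^\ast(T^n/W;F)\cong H^\ast(T^n;F)^W$, as asserted.

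The two points requiring genuine care are the reduction of part (2) to $\ZZ^n$ at the level of GIT quotients --- which does not follow formally from the homotopy statement of Corollary \ref{cor:abelianization} and must be extracted from the equivariance of the maps in Theorem \ref{thm:induction} --- and the bookkeeping of the $W$-module structure on $H^\ast(G/T;F)$ together with the verification that $K/T_K\simeq G/T$ respects it, where one must recall that $W$ acts on $G/T$ by right translation and not through any space-level action on the flag variety $G/B$. I expect the former to be the main obstacle; the rest is assembly of the cited results.
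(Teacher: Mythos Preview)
Your argument for part~(1) is essentially the paper's: reduce to $\ZZ^n$ via Corollary~\ref{cor:abelianization}, retract to $K$, apply Baird, and match $K/T_K\times T_K^{\,n}$ with $G/T\times T^{\,n}$ $W$-equivariantly.

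For part~(2) the paper takes a different route that sidesteps exactly the obstacle you flag. Rather than arguing on the complex side that the $G$-equivariant closed immersions from Theorem~\ref{thm:induction} induce homeomorphisms onto unions of components of the GIT quotient (which, as you note, does not follow from a homotopy statement alone), the paper first passes to the compact group. It uses \cite[Theorem~II]{Bergeron:TopNilRepns} to get $\left(\HGGmG\right)_\one\simeq\left(\Hom(\G,K)/K\right)_\one$, then Lemma~\ref{connectedquotient} to identify the latter with $\Hom(\G,K)_\one/K$, and then Corollary~\ref{compactabelianization} (which is an honest \emph{homeomorphism}, not merely a homotopy equivalence) to reduce to $\Hom(\ZZ^n,K)_\one/K$. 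Baird's identification of this last space with $T_K^{\,n}/W$ and the transfer finish the argument. The point is that on the compact side Proposition~\ref{compactinduction} gives actual homeomorphisms onto unions of components, so quotienting by $K$ is unproblematic; your approach on the complex side requires an extra argument (e.g.\ that the image of $\iota$ is Zariski-open in, hence equal to, the union of components it meets) which you do not supply.

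Your alternative endpoint $\left(\Hom(\ZZ^n,G)\sslash G\right)_\one\cong T^{\,n}/W$ is plausible, but the sentence about ``every closed $G$-orbit near the trivial representation'' having toral image is doing more work than it can bear: you have not argued that this forces the GIT-quotient map from the subvariety to be surjective onto the component, only that the relevant closed orbits lie in the image. The paper's detour through $K$ buys you exactly this missing surjectivity for free.
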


While the results above indicate many similarities between representation
spaces of abelian and non-abelian nilpotent groups, the latter have a much
richer topology than the former.  For instance, recall that for a connected
semisimple group $S$, the variety $\Hom(\ZZ^2,S)$ is irreducible and thus
connected \cite{Richardson:CommutingVarieties}. Moreover, $\Hom(\ZZ^r,\SL_n\CC)$,
$\Hom(\ZZ^r,\Sp_{2n}\CC)$ and the corresponding character varieties are
connected for all values of $r$ and $n$.  The situation for non-abelian
nilpotent groups is markedly different:
\begin{thm}\label{thm:disconnected}
Let $G$ be the group of complex points of a (possibly disconnected)
reductive algebraic group. If $\Gamma$ is a finitely generated nilpotent group
which surjects onto a finite non-abelian subgroup of $G$,
then $\HGG$ and $\,\HGGmG$ are both disconnected topological spaces.
\end{thm}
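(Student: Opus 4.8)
The idea is to exhibit a conjugation-invariant, locally constant function on $\HGG$ which distinguishes the trivial representation from a representation with finite non-abelian image; since it then descends to the character variety, this settles disconnectedness of both spaces at once. Write $q\colon\G\twoheadrightarrow F$ for the given epimorphism onto the finite non-abelian subgroup $F\subseteq G$, let $\iota\colon F\hookrightarrow G$ be the inclusion, and put $\rho_0:=\iota\circ q\in\HGG$. Being a quotient of the nilpotent group $\G$, the group $F$ is nilpotent; let $s\geq 2$ be its nilpotency class.

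I would first reduce to the case where $\G$ has nilpotency class exactly $s$. Since $q(\G_{(s+1)})=F_{(s+1)}=\{e\}$, the map $q$ factors through $\bar\G:=\G/\G_{(s+1)}$, which has class exactly $s$ and still surjects onto $F$; by Theorem~\ref{thm:induction} the inclusion $\Hom(\bar\G,G)\hookrightarrow\HGG$ is a homotopy equivalence onto a union of connected components (and, via the $G$-equivariant form of that result, the analogous statement holds at the level of character varieties), so it suffices to prove the theorem for $\bar\G$. Thus assume $\G$ has class $s\geq 2$. Then $\Gs$ is a non-trivial subgroup of $Z(\G)$ contained in $[\G,\G]$, and $q(\Gs)=F_{(s)}\neq\{e\}$; fix $\gamma\in\Gs$ with $q(\gamma)\neq e$.

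The key point is that $\operatorname{tr}\rho(\gamma)$ takes only finitely many values as $\rho$ ranges over all representations. Choose once and for all a faithful embedding $G\hookrightarrow\GL_n(\CC)$ and regard $\HGG\subseteq\Hom(\G,\GL_n(\CC))$. For any $\rho\in\Hom(\G,\GL_n(\CC))$, the matrix $\rho(\gamma)$ lies in the centre of $\rho(\G)$ (because $\gamma\in Z(\G)$) and is a product of commutators of elements of $\rho(\G)$ (because $\gamma\in[\G,\G]$). In a composition series of $\CC^n$ as a $\rho(\G)$-module, $\rho(\gamma)$ therefore acts on each $d$-dimensional factor as a scalar $\lambda$ by Schur's lemma, and being a product of commutators in the general linear group of that factor it has determinant $1$ there, whence $\lambda^{d}=1$. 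Consequently every eigenvalue of $\rho(\gamma)$ is a root of unity of order $\leq n$, so $\tau(\rho):=\operatorname{tr}\rho(\gamma)$ lies in the finite set of sums of $n$ such roots of unity. Being continuous with finite image, $\tau$ is locally constant on $\Hom(\G,\GL_n(\CC))$, hence on $\HGG$; as $\tau$ is conjugation-invariant it induces, by the universal property of $\HGGmG$, a locally constant function $\bar\tau$ on the character variety.

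Finally one computes $\tau(\one)=n$, whereas $\tau(\rho_0)=\operatorname{tr}\iota(q(\gamma))$ where $\iota(q(\gamma))$ is an element of finite order different from the identity, so its eigenvalues are roots of unity not all equal to $1$ and $\operatorname{tr}\iota(q(\gamma))\neq n$. Hence $\tau$ separates $\rho_0$ from $\one$ in $\HGG$ and $\bar\tau$ separates $[\rho_0]$ from $[\one]$ in $\HGGmG$, so neither space is connected. I expect the main obstacle to be exactly the point that forces the reduction: the first invariant one would try, namely $\rho([a,b])$ or its conjugacy class, is \emph{not} locally constant on $\HGG$ in general, since eigenvalues of commutators can vary continuously; it is only the additional input that $\gamma$ is central — so that $\rho(\gamma)$ is scalar on composition factors and the determinant-one constraint confines its eigenvalues to a bounded set of roots of unity — that rescues the argument, and arranging for such a central commutator not to die in $F$ is precisely what Theorem~\ref{thm:induction} is invoked to do.
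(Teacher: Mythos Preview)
Your argument is correct in substance and follows a route different from the paper's. The paper first passes to a maximal compact $K\subset G$ using \cite{Bergeron:TopNilRepns}, reduces disconnectedness of the character variety to that of $\Hom(\Gamma,K)$ via Lemma~\ref{connectedquotient}, and then derives the latter by contradiction from the bundle of Proposition~\ref{bundle} together with Proposition~\ref{compactinduction}. You instead stay with $G$ throughout and build an explicit conjugation\nobreakdash-invariant locally constant function $\tau(\rho)=\operatorname{tr}\rho(\gamma)$; your eigenvalue computation (a central element lying in the commutator subgroup must act on each composition factor by a root of unity of order at most the dimension) is precisely the mechanism of Lemma~\ref{Obound}. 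The payoff of your approach is that a single invariant handles $\HGG$ and $\HGGmG$ simultaneously, without the detour through compact groups; the paper's approach, on the other hand, packages the same finiteness as a statement about the base of a fibre bundle and thereby gets slightly more structural information.

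One point deserves more care. In reducing from $\Gamma$ to $\bar\Gamma=\Gamma/\Gamma_{(s+1)}$ you appeal to a ``$G$-equivariant form'' of Theorem~\ref{thm:induction} to transfer the conclusion to character varieties, but that theorem only asserts a homotopy equivalence of representation spaces, and a $G$-equivariant homotopy equivalence does not automatically descend to GIT quotients; indeed for complex $G$ the subspace $\Hom(\bar\Gamma,G)$ is typically \emph{not} open in $\HGG$ (take $\Gamma$ Heisenberg and deform a commuting pair in $\gln$ along a one-parameter unipotent family to a pair whose commutator is a nontrivial unipotent). The repair is easy and in fact brings you back to the paper's toolkit: pass to $K$ via \cite[Theorem~II]{Bergeron:TopNilRepns} and use Proposition~\ref{compactinduction}, where $\Hom(\bar\Gamma,K)$ genuinely is a $K$-invariant clopen subset of $\Hom(\Gamma,K)$, so that $\Hom(\bar\Gamma,K)/K$ is clopen in $\Hom(\Gamma,K)/K$ and disconnectedness transfers upward.
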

Since non-abelian free nilpotent groups and Heisenberg groups surject onto
the non-abelian nilpotent group of order $8$, this implies:
\begin{cor}\label{cor:heisenberg}
Let $\Gamma$ be a non-abelian free nilpotent group or a Heisenberg group.
If $G$ is the group of complex points of a reductive algebraic group, then
$\HGG$ and $\HGGmG$ are connected if and only if $G$ is an algebraic torus.
\end{cor}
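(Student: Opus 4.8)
The plan is to prove the two implications separately: Theorem~\ref{thm:disconnected} will dispose of most of the harder direction, and an elementary argument handles the rest. First I would record the two features of $\Gamma$ that are used. In both families $H_1(\Gamma;\ZZ)$ is free abelian of rank $r\ge 2$ — a non-abelian free nilpotent group on $n\ge 2$ generators has abelianization $\ZZ^n$, and the $3$-dimensional integral Heisenberg group has abelianization $\ZZ^2$ — and $\Gamma$ surjects onto the free $2$-step nilpotent group on two generators, which is exactly the integral Heisenberg group; the latter surjects onto each of the two non-abelian groups of order $8$, the dihedral group $D_4$ and the quaternion group $Q_8$, both being $2$-generated of class $2$. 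Granting this, the ``if'' direction is immediate: when $G$ is an algebraic torus every homomorphism $\Gamma\to G$ factors through $H_1(\Gamma;\ZZ)\cong\ZZ^r$, so $\HGG\cong G^r$ is connected, and since $G$ acts trivially on itself by conjugation, $\HGGmG=\HGG$ is connected as well.

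For the converse I would argue the contrapositive, distinguishing two cases. Suppose first that $G$ is disconnected. Here Theorem~\ref{thm:disconnected} is of no help, since a disconnected reductive group such as $\CC^{*}\times\ZZ/2$ need not contain any non-abelian finite subgroup, so I would proceed directly. Choose $g$ outside the identity component $G^\circ$ and let $\rho\colon\Gamma\to G$ be the composite of a surjection $\Gamma\twoheadrightarrow\ZZ$ (which exists as $r\ge 1$) with a surjection $\ZZ\twoheadrightarrow\langle g\rangle$. The continuous map $\HGG\to\Hom(\Gamma,G/G^\circ)$ induced by $G\to G/G^\circ$ sends $\rho$ to a nontrivial homomorphism and the trivial representation to the trivial one; as $\Hom(\Gamma,G/G^\circ)$ is finite and discrete, $\HGG$ is disconnected. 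This map is equivariant for the conjugation actions, so it descends to a continuous map $\HGGmG\to\Hom(\Gamma,G/G^\circ)/\!\!/(G/G^\circ)$ to another finite discrete space that still separates the class of $\rho$ from the trivial class; hence $\HGGmG$ is disconnected too.

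If instead $G$ is connected but not a torus, then its derived subgroup $[G,G]$ is a nontrivial connected semisimple group and therefore contains a closed connected subgroup isomorphic to $\SL_2(\CC)$ or to $\mathrm{PGL}_2(\CC)$ — a rank-one subgroup attached to any root. The former contains a copy of $Q_8$ (inside $\SU(2)$) and the latter a copy of $D_4$ (realised by the M\"obius transformations $z\mapsto iz$ and $z\mapsto 1/z$ of $\mathbb{P}^1$). Either way $G$ contains a finite non-abelian subgroup onto which $\Gamma$ surjects, so Theorem~\ref{thm:disconnected} applies and both $\HGG$ and $\HGGmG$ are disconnected, completing the contrapositive and hence the corollary.

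The step I expect to require the most care is the organisation of the non-torus direction, rather than any single computation: Theorem~\ref{thm:disconnected} only settles the case where the semisimple part of $G$ is nontrivial, so one must recognise separately that the disconnected case — including disconnected \emph{abelian} $G$, which has no non-abelian subgroups at all — needs the elementary $\pi_0$-argument above. The remaining input, that a nontrivial complex semisimple group always contains a rank-one subgroup and hence a non-abelian subgroup of order $8$, is standard structure theory.
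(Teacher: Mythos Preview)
Your proof is correct and follows the same overall strategy as the paper --- reduce to Theorem~\ref{thm:disconnected} via an order-$8$ subgroup --- but your case decomposition is different and, in one respect, more self-contained. The paper splits the non-torus direction into ``$G$ disconnected or not simply-connected'' versus ``$G$ simply-connected'': the first case is disposed of by citing the Pettet--Souto connectivity criterion for $\Hom(\ZZ^r,G)$ together with \cite{Bergeron:TopNilRepns} and Lemma~\ref{connectedquotient}, and in the second case a simply-connected non-torus automatically contains $\SL_2\CC$ and hence $Q_8$. You instead split into ``$G$ disconnected'' versus ``$G$ connected non-torus'': your $\pi_0$-argument via $G\to G/G^\circ$ handles the first case without any external citation (and, as you note, covers disconnected \emph{abelian} $G$, where Theorem~\ref{thm:disconnected} is inapplicable), while in the second case you find a rank-one subgroup isomorphic to $\SL_2\CC$ or $\mathrm{PGL}_2\CC$ and use $Q_8$ or $D_4$ accordingly. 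The trade-off is that you need two order-$8$ groups rather than one, but you avoid invoking the abelian connectivity results. You also spell out the ``if'' direction explicitly, which the paper leaves implicit.
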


\begin{rem}
All of the preceding statements remain true when $G$ is replaced by a compact
Lie group $K$.  In fact, we will prove most of them in this setting before
obtaining the complex reductive case via a homotopy equivalence.
\end{rem}

\textbf{Outline of the paper.}
We begin Section \ref{interesting bundle} by   describing compact  representation spaces using a fibre bundle.  Then, in Section \ref{proofs}, we use this bundle to prove Theorem \ref{thm:induction} and Theorem \ref{thm:disconnected} along with their various corollaries.

\textbf{Acknowledgements.}
The authors would like to thank Alejandro \'Adem and Alexandra Pettet for their comments on a preliminary version of this note. The first named author would also like to thank 
Man Chuen Cheng, Justin Martel and Juan Souto for many stimulating conversations.
 
\section{An interesting bundle}\label{interesting bundle}
The goal of this section is to prove the following key proposition:
\begin{prop}\label{bundle}
Let $K$ be a (possibly disconnected) compact Lie group. If $\Gamma$ is an $s$-step nilpotent group with $s\geq 2$, then the set of abelian groups
$$
{\cal{F}}:=\{\rho(\Gamma_{(s)})\subset K: \rho\in\Hom(\Gamma,K)\}
$$
admits a homogeneous manifold structure with finitely many connected components 
for which the projection map
\begin{equation}
p:\Hom(\Gamma,K)\raw {\mathcal{F}}\text{, }p(\rho)=\rho(\Gamma_{(s)})
\end{equation}
 is a locally trivial fibre  bundle.
\end{prop}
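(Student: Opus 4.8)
The plan is to put the manifold structure on $\mathcal{F}$ by hand, as a finite disjoint union of $K$-orbits, and then to read off the bundle structure from the $K$-equivariance of $p$. Two things have to be established: that only finitely many $K$-conjugacy classes of subgroups arise as $\rho(\Gamma_{(s)})$, and that $\rho\mapsto\rho(\Gamma_{(s)})$ is continuous for the resulting topology; once these are in place the bundle statement is formal.

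I would begin with the structural preliminaries. Since $\Gamma$ is $s$-step nilpotent, $[\Gamma,\Gamma_{(s)}]=\Gamma_{(s+1)}=\{e\}$, so $\Gamma_{(s)}$ is central in $\Gamma$; being a subgroup of a finitely generated nilpotent group it is finitely generated abelian, say generated by $\delta_1,\dots,\delta_p$. Fix $\rho\in\Hom(\Gamma,K)$ and let $H=\overline{\rho(\Gamma)}$, a compact nilpotent Lie group. Then $H^0$ is a torus, and since the conjugation action of $H$ on $H^0$ factors through the finite group $H/H^0$ and is unipotent by nilpotency, it is trivial; hence $H^0$ is central in $H$ and the commutator map of $H$ factors through $H/H^0$. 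It follows that $[H,H]$ is finitely generated and torsion, hence finite, and since $\rho(\Gamma_{(s)})\subseteq[\rho(\Gamma),\rho(\Gamma)]\subseteq[H,H]$, each member of $\mathcal{F}$ is a finite abelian subgroup of $K$. A further argument — embedding $K$ into a unitary group $\U(N)$ and bounding the order of the $s$-th term of the lower central series of an $n$-generator nilpotent subgroup of $\U(N)$ — yields a constant $M=M(K,\Gamma)$ with $|\rho(\Gamma_{(s)})|\le M$ for every $\rho$; this uniform bound is what makes the continuity argument below work.

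The heart of the matter is that the conjugacy class $\bigl[\rho(\Gamma_{(s)})\bigr]$ is a locally constant function of $\rho$ on $\Hom(\Gamma,K)$. Because $\rho(\Gamma_{(s)})$ has order at most $M$, it is the $\rho$-image of a fixed finite set of words in the $\delta_i$, so $\rho\mapsto\rho(\Gamma_{(s)})$ is continuous into the space of subgroups of $K$ with the Hausdorff metric; thus $\rho(\Gamma_{(s)})$ lies in any prescribed neighbourhood of $A_0:=\rho_0(\Gamma_{(s)})$ once $\rho$ is close to $\rho_0$. By Montgomery--Zippin rigidity of compact subgroups of Lie groups, a subgroup contained in a suitable neighbourhood of $A_0$ is conjugate to a subgroup of $A_0$ by an element near $1$; and since $A_0$ is finite while $\rho(\Gamma_{(s)})$ is Hausdorff-close to it, a pigeonhole count on the finitely many elements forces the cardinalities to agree, so $\rho(\Gamma_{(s)})$ is in fact conjugate to $A_0$. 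As $\Hom(\Gamma,K)$, being a compact real algebraic set, has finitely many connected components, only finitely many classes $[A_1],\dots,[A_\ell]$ occur. I then set $\mathcal{F}=\bigsqcup_{j=1}^{\ell}\mathcal{O}_j$ with $\mathcal{O}_j:=\{kA_jk^{-1}:k\in K\}\cong K/N_K(A_j)$: this is a manifold with finitely many components on which $K$ acts with orbits the $\mathcal{O}_j$, and the computation above both identifies it with $p(\Hom(\Gamma,K))$ and matches the Hausdorff-metric topology on the image with this manifold topology, so that $p$ is continuous and $K$-equivariant and each $p^{-1}(\mathcal{O}_j)$ is a union of connected components of $\Hom(\Gamma,K)$.

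The bundle property is then soft. For each $j$, $p$ restricts to a continuous $K$-equivariant surjection $p^{-1}(\mathcal{O}_j)\to\mathcal{O}_j=K/N_K(A_j)$; any equivariant map onto a homogeneous space of a Lie group is the associated bundle $K\times_{N_K(A_j)}p^{-1}(A_j)$ — the natural map $K\times_{N_K(A_j)}p^{-1}(A_j)\to p^{-1}(\mathcal{O}_j)$ being a homeomorphism by the usual compact-to-Hausdorff argument — and local triviality over $K/N_K(A_j)$ comes from local sections of $K\to K/N_K(A_j)$. Assembling over $j$ shows $p$ is a locally trivial fibre bundle over $\mathcal{F}$, its fibre allowed to change between components of $\mathcal{F}$. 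The main obstacle is the local constancy of the conjugacy class in the third paragraph: it rests on knowing $\rho(\Gamma_{(s)})$ is finite of uniformly bounded order and on Montgomery--Zippin rigidity; establishing the order bound, via the structure of compact nilpotent Lie groups together with a linear, Jordan-type estimate, is the real technical work, everything downstream of the rigidity statement being formal.
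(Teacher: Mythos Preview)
Your proof is correct and follows the same overall architecture as the paper's: establish a uniform bound on $|\rho(\Gamma_{(s)})|$ via a linear embedding (the paper isolates this as a separate lemma, showing that for any $s$-step nilpotent $N\subset\SU_m$ with $s\ge 2$ each character of $N_{(s)}$ takes values in the $m$-th roots of unity), topologise $\mathcal{F}$ as a finite disjoint union of conjugation orbits, and read off local triviality from equivariance over each orbit via the associated-bundle construction. The paper works with $K^o$-orbits $K^o/N_{K^o}(H)$ rather than $K$-orbits, but this is cosmetic.

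The one substantive difference is in how finiteness of the set of conjugacy classes and continuity of $p$ are obtained. The paper argues directly that a compact Lie group has only finitely many conjugacy classes of finite abelian subgroups of bounded order, topologises the larger set $\tilde{\mathcal F}$ of all such subgroups first, and is then quite terse about why $p$ is continuous. You instead prove local constancy of the conjugacy class $[\rho(\Gamma_{(s)})]$ via Hausdorff-metric continuity and Montgomery--Zippin rigidity, and deduce finiteness of the set of classes from $\pi_0(\Hom(\Gamma,K))$ being finite. Your route makes the continuity of $p$ genuinely transparent, at the price of invoking rigidity of compact subgroups and the real-algebraicity of $\Hom(\Gamma,K)$; the paper's route is more self-contained on the finiteness side but leaves more for the reader to unpack on continuity.
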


The proof of Proposition \ref{bundle} relies on  the following lemma:
\begin{lem}\label{Obound}
For all $m\in\bb{N}$ there is an $O=O(m)\in\bb{N}$ such that, if $N\subset \SU_m$ is an $s$-step  nilpotent group with $s\geq 2$, then $N_{(s)}$ is an abelian subgroup of $\SU_m$ of order bounded by $O$.
\end{lem}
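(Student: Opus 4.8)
The plan is to show directly that $N_{(s)}$ is a \emph{finite} group whose exponent divides $m!$, and then to bound its cardinality by exploiting that a commuting family of unitaries is simultaneously diagonalizable. The point to emphasize at the outset is that one should \emph{not} try to invoke Jordan's theorem on finite linear groups, since $N$ itself need not be finite; instead the bound comes from a Schur's lemma / determinant argument applied to the central subgroup $N_{(s)}$.

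First I would record that, since $N$ is $s$-step nilpotent, $[N,N_{(s)}] = N_{(s+1)} = \{e\}$, so $N_{(s)}$ is central in $N$; in particular $N_{(s)}$ is abelian, which is the first assertion. Next, view the inclusion $N\subset\SU_m\subset\GL_m(\bb{C})$ as a (not necessarily irreducible) representation on $V=\bb{C}^m$. Since $N$ acts unitarily, $V$ decomposes as a direct sum $V=\bigoplus_j V_j$ of $N$-irreducible subspaces, each of dimension at most $m$. Fix $\gamma\in N_{(s)}$. Because $N_{(s)}=[N,N_{(s-1)}]$ is generated by commutators, $\gamma$ is a product of commutators of elements of $\SU_m$, so $\det\bigl(\gamma|_{V_j}\bigr)=1$ for every $j$. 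On the other hand, $\gamma$ is central in $N$ and $V_j$ is $N$-irreducible, so Schur's lemma gives $\gamma|_{V_j}=\lambda_j\,\Id_{V_j}$ for some $\lambda_j\in\bb{C}^{\times}$. Combining the two facts yields $\lambda_j^{\dim V_j}=1$, hence $\lambda_j^{m!}=1$ since $\dim V_j\le m$; therefore $\gamma^{m!}$ acts as the identity on each $V_j$, i.e.\ $\gamma^{m!}=\Id$. Thus $N_{(s)}$ has exponent dividing $m!$.

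To finish, I would observe that $N_{(s)}$ is a set of commuting unitary matrices, hence simultaneously diagonalizable, hence contained in a conjugate of the diagonal maximal torus $T\cong(S^1)^m$ of $\U_m$. An abelian subgroup of $T$ of exponent dividing $m!$ lies in the $m!$-torsion subgroup $T[m!]\cong(\ZZ/m!\ZZ)^m$, which has order $(m!)^m$. Hence $|N_{(s)}|\le (m!)^m$, and we may take $O(m)=(m!)^m$.

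The only step that requires genuine care is the exponent bound: one must combine the centrality of $N_{(s)}$ (forcing scalar action on each irreducible constituent) with the vanishing of determinants of commutators, in order to conclude that every element of $N_{(s)}$ acts by roots of unity of uniformly bounded order on $\bb{C}^m$ — everything else is routine.
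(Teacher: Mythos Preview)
Your argument is correct and follows essentially the same line as the paper's: both use that $N_{(s)}$ is central in $N$, decompose $\bb{C}^m$ into pieces on which $N_{(s)}$ acts by scalars, and then use a determinant argument on each $N$-invariant summand to force those scalars to be roots of unity of bounded order. The only difference is cosmetic---the paper decomposes $\bb{C}^m$ into the eigenspaces of $N_{(s)}$ and checks these are $N$-invariant, whereas you decompose into $N$-irreducibles and invoke Schur's lemma, which leads you to the (harmlessly) cruder bound $O(m)=(m!)^m$. One small wording fix: in your determinant step say ``commutators of elements of $N$'' rather than ``of $\SU_m$'', since you need the factors to preserve each $V_j$ for $\det(\cdot|_{V_j})$ to be multiplicative.
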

\begin{proof}
Recall that $N_{(s)}$ is an abelian subgroup of $\SU_m$ contained in the centre of $N$. As such, there is a direct sum decomposition $\bb{C}^m=V_1\oplus\ldots\oplus V_r$ and $r$ characters $\chi_1,\ldots,\chi_r:N_{(s)}\raw\bb{C}^\times$ such that
 $\chi_i\neq\chi_j$ for all $i\neq j$ and
  $\gamma(v)=\chi_i(\gamma)\cdot v$ for all $\gamma\in N_{(s)}$ and $v\in V_i$.
  Moreover, for all $g\in N$, $\gamma\in N_{(s)}$ and $v\in V_i,$ we have 
$$\gamma(g(v))=g(\gamma(v))=g(\chi_i(\gamma)\cdot v)=\chi_i(\gamma)\cdot g(v).
$$ This allows us to consider the restrictions of the determinant homomorphism 
$$
\mathrm{det}_i:N\raw \bb{C}^\times,\,\,\,\mathrm{det}_i(g):=\det(g|_{V_i})
$$
where, since $\bb{C}^\times$ is abelian and  $s\geq 2$, the subgroup $N_{(s)}$ must be contained in $\ker(\mathrm{det}_i)$. This means that for all $i$ and all $\gamma\in N_{(s)},$ we have 
$$
\mathrm{det}_i(\gamma)=\chi_i(\gamma)^{\dim V_i}=1,
$$ so $\chi_i(\gamma)$ is always a root of unity of order bounded by $m$. Consequently, $N_{(s)}$ is conjugate in $\SU_m$ to a subset of those diagonal matrices whose diagonal elements are roots of unity of order bounded by $m$. This completes the proof since the order of this finite set does not depend on $s$.
\end{proof}
\begin{proof}[Proof of Proposition \ref{bundle}]
Choose a faithful embedding of $K$ into $\SU_m$.
By Lemma \ref{Obound}, there is a constant $O\in\bb{N}$ uniformly bounding the order of abelian subgroups of $K$ occurring as the image  of $\Gamma_{(s)}$ under homomorphisms $\rho:\Gamma\raw K$. In order to give 
${\mathcal{F}}$ a homogeneous manifold structure, we first consider the slightly larger set 
$$
\tilde{\cal{F}}:=\{A\subset K:A\text{ is an abelian subgroup of order bounded by }O\}.
$$
Observe that $K^o$ (the identity component of $K$) acts by conjugation on $\tilde{\cal{F}}$ with closed stabilizers. As such, we can endow $\tilde{\mathcal{F}}$ with the orbifold structure with respect to which each $K^o$-orbit is a connected homogeneous $K^o$-manifold (see \cite{OnishchikVinberg:LieGroups}). Concretely, if we define the ``connected normalizer'' as $N_{K^o}(H):=N_{K}(H)\cap K^o,$
then 
 the connected component of $H\in\tilde{\mathcal{F}}$ is identified with $K^o/N_{K^o}(H)$. 
 Having a topology on each $K^o$-orbit, we endow $\tilde{\cal{F}}$ with the disjoint union topology. Since $K$ is a compact Lie group, there are only finitely many conjugacy classes of abelian subgroups of $K$ of order bounded by $O$ and, 
in particular, $\tilde{\mathcal{F}}$ has only finitely many connected components.

A  homomorphism $\rho:\Gamma_{(s)}\raw K$ need not extend to the full group
$\Gamma$ so the map 
$$p:\Hom(\Gamma,K)\raw \tilde{\mathcal{F}}\text{, }p(\rho)=\rho(\Gamma_{(s)})$$ may not be surjective.
Accordingly, we denote $\mathcal{F}:=p(\Hom(\Gamma,K))$ and observe by $K^o$-equivariance of $p$ that it is a union of connected components of $\tilde{\mathcal{F}}$.
   Let $\mathcal{Z}\subset \mathcal{F}$ denote the connected component of a
finite abelian subgroup $H\in \mathcal{F}$ and let
$\mathcal{H}:=p\inv(\mathcal{Z})\subset \Hom(\Gamma,K)$. Since
$\cal{F}$ has only finitely many components, it follows that $p$ is a continuous map and it now suffices to show that $p\colon\mathcal{H}\raw \mathcal{Z}$ is a locally trivial fibre bundle. Observing once again that $p$ is $K^o$-\,equivariant, this follows at once from \cite[Proposition 2.3.2]{Bredon:IntroTranfGps}. More concretely, letting $\mathcal{H}(H):=p\inv(H)$, we can identify the restriction of $p$ to $\mathcal{H}$ with the twisted product $$(K^o\times\mathcal{H}(H))/N_{K^o}(H)\raw K^o/N_{K^o}(H)$$ where $N_{K^o}(H)$ acts on $K^o$ (resp. $\mathcal{H}(H)$) by right multiplication (resp. conjugation). \end{proof}

\section{Proofs of the main results}\label{proofs}
Let $G$ be the group of complex points of a (possibly disconnected) reductive algebraic group and recall that such a $G$ necessarily arises as the complexification of a (possibly disconnected) compact Lie group $K$. In this section, we use Proposition \ref{bundle} to prove the results mentioned in the introduction. In most cases, we prove a corresponding statement with $K$ in lieu of $G$ before obtaining the claimed result. We refer the reader to Onishchick--Vinberg \cite{OnishchikVinberg:LieGroups} for basic facts about Lie groups and complex algebraic groups.

Let $\Gamma$ be an $s$-step nilpotent group with $s\geq 2$ and recall that, for all $i,$ the epimorphism $\Gamma\raw \Gamma/\Gamma_{(i)}$ induces an embedding $\Hom(\Gamma/\Gamma_{(i)},K)\raw \Hom(\Gamma,K).$ Often, we shall abuse notation and identify $\Hom(\Gamma/\Gamma_{(i)},K)$ with its image under this embedding. As a first consequence of Proposition \ref{bundle} we obtain:
\begin{prop}\label{compactinduction}
Let $K$ be a (possibly disconnected) compact Lie group. If $\Gamma$ is a finitely generated nilpotent group then, for all $i\geq2$, the inclusion
$$
\Hom(\Gamma/\Gamma_{(i)},K)\xrightarrow{\iota} \Hom(\Gamma,K)
$$
is a homeomorphism onto the union of those components of the target intersecting the image of $\iota$.
\end{prop}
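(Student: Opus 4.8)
The plan is to induct on the nilpotency class $s$ of $\Gamma$, using Proposition \ref{bundle} to peel off the last term $\Gamma_{(s)}$ of the lower central series. The base case $s = 1$ (so $\Gamma$ abelian) is vacuous, since then $\Gamma_{(i)} = \{e\}$ for all $i \geq 2$ and $\iota$ is already the identity; more generally if $i > s$ the statement is trivial, and if $i = s$ it follows directly from Proposition \ref{bundle}, as I explain next. So assume $s \geq 2$ and first treat $i = s$. Consider the fibre bundle $p\colon \Hom(\Gamma, K) \to \mathcal{F}$ from Proposition \ref{bundle}, with $p(\rho) = \rho(\Gamma_{(s)})$. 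The subspace $\Hom(\Gamma/\Gamma_{(s)}, K) \subset \Hom(\Gamma, K)$ is exactly $p^{-1}(\{e\})$, i.e.\ the fibre over the trivial subgroup, which is a single point of $\mathcal{F}$ and hence both open and closed in $\mathcal{F}$ (every orbit is a component, and $\{e\}$ is its own orbit). Therefore $p^{-1}(\{e\})$ is a union of connected components of $\Hom(\Gamma, K)$: it is open because $p$ is continuous and $\{e\}$ is open in $\mathcal{F}$, and closed because $\{e\}$ is closed. The inclusion of a union of components onto "the union of those components meeting its image" is trivially a homeomorphism onto that union, which settles $i = s$.

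For the inductive step, suppose the proposition holds for all nilpotent groups of class $< s$, and let $\Gamma$ have class $s \geq 2$; fix $2 \leq i < s$. Set $\Lambda := \Gamma/\Gamma_{(s)}$, which is nilpotent of class $s - 1$, and observe that $\Lambda_{(i)} = \Gamma_{(i)}/\Gamma_{(s)}$, so that $\Lambda/\Lambda_{(i)} \cong \Gamma/\Gamma_{(i)}$ canonically. The $i = s$ case just proved gives that $\Hom(\Lambda, K) = \Hom(\Gamma/\Gamma_{(s)}, K)$ is a union of components of $\Hom(\Gamma, K)$, with the inclusion a homeomorphism onto that union. The inductive hypothesis applied to $\Lambda$ gives that $\Hom(\Lambda/\Lambda_{(i)}, K) = \Hom(\Gamma/\Gamma_{(i)}, K)$ includes as a union of components of $\Hom(\Lambda, K)$, again a homeomorphism onto its image. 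Composing these two inclusions, and using that a union of components of a union of components of $X$ is a union of components of $X$ (and that the image "components meeting the image" is unchanged under the composition), we conclude that $\Hom(\Gamma/\Gamma_{(i)}, K) \hookrightarrow \Hom(\Gamma, K)$ is a homeomorphism onto the union of components of the target that it meets.

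The one point requiring a little care — and the main thing to verify — is the compatibility of the identifications $\Lambda_{(i)} = \Gamma_{(i)}/\Gamma_{(s)}$ and $\Lambda/\Lambda_{(i)} \cong \Gamma/\Gamma_{(i)}$ with the embedding maps $\iota$, i.e.\ that the two inclusion maps compose to the inclusion $\Hom(\Gamma/\Gamma_{(i)}, K) \hookrightarrow \Hom(\Gamma, K)$ one actually wants. This is a routine diagram chase: all three embeddings are pullback along the evident quotient homomorphisms $\Gamma \twoheadrightarrow \Gamma/\Gamma_{(s)} \twoheadrightarrow \Gamma/\Gamma_{(i)}$ (for $i < s$ these compose correctly because $\Gamma_{(s)} \subseteq \Gamma_{(i)}$), so functoriality of $\Hom(-, K)$ gives the needed commutativity. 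I expect no genuine obstacle here; the substance of the argument is entirely contained in Proposition \ref{bundle}, and the rest is bookkeeping on the lower central series and an induction.
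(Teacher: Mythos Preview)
Your proposal is correct and follows essentially the same approach as the paper: induction on the nilpotency class, with the key step being that $\{e_K\}$ is an isolated point of $\mathcal{F}$ so that $p^{-1}(\{e_K\}) = \Hom(\Gamma/\Gamma_{(s)},K)$ is open and closed in $\Hom(\Gamma,K)$, followed by composing with the inductive hypothesis applied to $\Lambda = \Gamma/\Gamma_{(s)}$ via the identification $\Lambda/\Lambda_{(i)} \cong \Gamma/\Gamma_{(i)}$. The paper's writeup is slightly terser but the argument is the same.
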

\begin{proof}
We proceed by induction on the nilpotence step of $\Gamma$. Recall from Proposition \ref{bundle} that 
$$
p:\Hom(\Gamma,K)\raw {\mathcal{F}}\text{, }p(\rho)=\rho(\Gamma_{(s)})
$$ is a locally trivial bundle.
If
$\Gamma$ is $2$-step nilpotent, then the image of $\iota$ consists of all
representations factoring through the abelianization of $\Gamma$, that is those
such that $\rho(\Gamma_{(2)}) = \{ e_K \}$.  Since $e_K$ is fixed by the
conjugation action of $K$, the subgroup $\{e_K\}\in\mathcal{F}$ is an isolated
point in the given topology.  Thus, for any $\rho\in p\inv(e_K)$,
the full connected component of $\rho$ (which is path-connected) has trivial
restriction to $\Gamma_{(2)}$ and we see that $p\inv\left(\{e_K\}\right)$
is the union of the connected components it intersects, 
completing the proof in this case.

Suppose now that $\Gamma$ is $s$-step nilpotent. If $i=s$, the same argument
as for the base case applies.  Otherwise, $i<s$ and then 
$$\Gamma/\Gamma_{(i)}\cong (\Gamma/\Gamma_{(s)})/(\Gamma_{(i)}/\Gamma_{(s)})$$
where the nilpotence step of $(\Gamma/\Gamma_{(s)})$ is $s-1$. 
As such, 
the induction hypothesis implies that each of the following two embeddings  
 $$
 \Hom(\Gamma/\Gamma_{(i)},K)\raw \Hom(\Gamma/\Gamma_{(s)},K)\raw \Hom(\Gamma,K)
 $$
  is a homeomorphisms onto those components of the target intersecting its image and, consequently, that the same holds for their composition.
  \end{proof}
  \begin{proof}[Proof of Theorem \ref{thm:induction}]
The theorem follows at once by  \cite[Theorem I]{Bergeron:TopNilRepns}.   \end{proof}
We can now prove:
\begin{cor}\label{compactabelianization}
If $\Gamma$ and $K$ are as in \emph{Proposition} $\ref{compactinduction}$, then there is a homeomorphism 
$$
\Hom(\Gamma,K)_{\mathds{1}}\cong\Hom(\bb{Z}^{\,rank\,H_1(\Gamma;\bb{Z})},K)_\mathds{1}.
$$
\end{cor}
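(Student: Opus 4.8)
The plan is to peel off the commutator subgroup using Proposition \ref{compactinduction}, and then to peel off the torsion part of the abelianization by a soft connectedness argument.

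First I would apply Proposition \ref{compactinduction} with $i=2$. Since $\Gamma_{(2)}=[\Gamma,\Gamma]$, the quotient $\Gamma/\Gamma_{(2)}$ is exactly $H_1(\Gamma;\bb{Z})$, and the trivial representation visibly factors through it; hence $\Hom(\Gamma,K)_{\mathds{1}}$ is one of the components of $\Hom(\Gamma,K)$ meeting the image of the induced inclusion, and the proposition yields a homeomorphism $\Hom(\Gamma,K)_{\mathds{1}}\cong\Hom\big(H_1(\Gamma;\bb{Z}),K\big)_{\mathds{1}}$. This reduces the statement to the case of the finitely generated abelian group $A:=H_1(\Gamma;\bb{Z})$, and in particular makes the argument insensitive to whether $\Gamma$ was abelian to begin with (in which case this first step is just the identity).

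Next, set $r:=\rank A$ and fix, by the structure theorem, a splitting $A\cong\bb{Z}^r\oplus F$ with $F$ finite. Restricting homomorphisms along $F\hookrightarrow A$ gives a continuous projection $\Hom(A,K)\raw\Hom(F,K)$. The key point is that $\mathds{1}$ is an isolated point of $\Hom(F,K)$: if $n$ is the exponent of $F$, then $g\mapsto g^n$ has derivative equal to multiplication by $n$ on the Lie algebra of $K$, hence is a local diffeomorphism at the identity, so $e$ is isolated among the solutions of $g^n=e$ in $K$; embedding $\Hom(F,K)$ into a finite product of copies of $K$ via a generating set of $F$ then forces any $\psi$ sufficiently close to the trivial homomorphism to be trivial. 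Consequently the projection above is constant equal to $\mathds{1}$ on the connected component of $\mathds{1}$ in $\Hom(A,K)$, so that component is contained in $\Hom(\bb{Z}^r,K)\times\{\mathds{1}\}$ (which is clopen and homeomorphic to $\Hom(\bb{Z}^r,K)$), whence it coincides with $\Hom(\bb{Z}^r,K)_{\mathds{1}}$ — the reverse inclusion being clear since the latter is connected and contains $\mathds{1}$. Chaining the two homeomorphisms gives $\Hom(\Gamma,K)_{\mathds{1}}\cong\Hom\big(\bb{Z}^{\,\rank H_1(\Gamma;\bb{Z})},K\big)_{\mathds{1}}$, which is the claimed statement.

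The only step with any real content is the isolation of $\mathds{1}$ in $\Hom(F,K)$ — equivalently, the local rigidity of the trivial representation of a finite group inside a compact Lie group — and even this is immediate from the inverse function theorem; everything else is formal bookkeeping about connected components and the compatibility of subspace topologies, resting entirely on Proposition \ref{compactinduction}.
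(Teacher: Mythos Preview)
Your proof is correct and follows essentially the same route as the paper: first apply Proposition \ref{compactinduction} with $i=2$ to reduce to $H_1(\Gamma;\bb{Z})$, then split off the torsion and use that the trivial homomorphism is isolated in $\Hom(F,K)$ to conclude. The paper phrases this last step as ``Lie groups contain no small subgroups'' and argues by contradiction via a path, while you give the equivalent inverse function theorem justification and a direct clopen argument; the content is the same.
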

\begin{proof}
By Proposition \ref{compactinduction}, we have a homeomorphism 
$$
\Hom(\Gamma,K)_\mathds{1}\cong\Hom(H_1(\Gamma;\bb{Z}),K)_\mathds{1}.
$$
 Since  $H_1(\Gamma;\bb{Z})=\Gamma/[\Gamma,\Gamma]$ is a finitely generated abelian group, we may identify $H_1(\Gamma;\bb{Z})$ with $\bb{Z}^r\oplus A$ where $r:={\,\mathrm{rank} \, H_1(\Gamma;\bb{Z})}$ and  $A$ is a finite abelian group. At this point we would like to show that $\Hom(\bb{Z}^r\oplus A,K)_\mathds{1}=\Hom(\bb{Z}^r,K)_\mathds{1}$.
Seeking a contradiction, suppose that $\rho_0\in \Hom(\bb{Z}^r\oplus A,K)_\mathds{1}$ maps $A$ non-trivially into $K$. By assumption, there is a continuous path of representations $[0,1]\mapsto \rho_t$ starting at $\rho_0$ and ending at the trivial representation $\rho_1=\mathds{1}$. 
But now, this path induces a continuous deformation in $\Hom(A,K)$ of the representation $\rho_0|_A$ to the trivial representation. This is impossible since  Lie groups contain no small subgroups.
\end{proof}
\begin{proof}[Proof of Corollary \ref{cor:abelianization}]
The corollary follows at once by \cite[Theorem I]{Bergeron:TopNilRepns}.
\end{proof}
Using this, we immediately  obtain:
\begin{named}{Corollary \ref{cor:fundamental}}
Let $G$ be the group of complex points of a  reductive algebraic group. If $\Gamma$ is a finitely generated nilpotent group, then: 
\begin{enumerate}
\item $\pi_1(\Hom(\Gamma,G)_\mathds{1})\cong \pi_1(G)^{\,rank\, H_1(\Gamma;\bb{Z})},$ and
\item $\pi_1((\Hom(\Gamma,G)/\!\!/G)_\mathds{1})\cong \pi_1(G/[G,G])^{\,rank\,{H_1(\Gamma;\bb{Z})}}.$
\end{enumerate}
\end{named}
\begin{proof}
The two formulas follow at once from Corollary \ref{cor:abelianization} by the main results of  
G\'omez--Pettet--Souto \cite{GomezPettetSouto:Fundamental} and Biswas--Lawton--Ramras  \cite{BiswasLawtonRamras:Fundamental_preprint}.\end{proof}


In order to prove our second corollary, we need the following:
 \begin{lem}\label{connectedquotient}
 If $K$ is a compact Lie group and $\Gamma$ is a finitely generated nilpotent group, then 
 $\Hom(\Gamma,K)_\mathds{1}/K=(\Hom(\Gamma,K)/K)_\mathds{1}.$ In particular, $\Hom(\Gamma,K)$ is connected if and only if $\Hom(\Gamma,K)/K$ is connected.
 \end{lem}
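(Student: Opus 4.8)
The plan is to exploit the fact that the quotient map $q\colon\Hom(\Gamma,K)\to\Hom(\Gamma,K)/K$ is continuous, surjective, open, and has connected fibres (each fibre is a single $K$-orbit, and $K$ is a compact Lie group, hence $K$ has finitely many components — but in fact the orbit $K\cdot\rho$ is a continuous image of $K$, so it need not be connected; I will have to be a little careful here). The cleanest route is to observe that the quotient map is a \emph{closed} map with connected fibres would give the result, but closedness is automatic here because $K$ is compact: the image of a closed set under $q$ is closed since $q$ is a proper quotient by a compact group. So the real point is the fibres. Even though an orbit $K\cdot\rho$ may be disconnected (if $K$ itself is), I claim the relevant components still behave well: the base point $\mathds{1}$ is fixed by all of $K$, so the component $\Hom(\Gamma,K)_{\mathds{1}}$ is $K$-invariant (a $K$-orbit is path-connected-to within itself only through $K^o$, but a homeomorphism of the whole space permutes components and fixes the one containing the fixed point $\mathds{1}$). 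Hence $K$ acts on $\Hom(\Gamma,K)_{\mathds{1}}$, and $\Hom(\Gamma,K)_{\mathds{1}}/K$ is a well-defined subspace of $\Hom(\Gamma,K)/K$.

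First I would show $\Hom(\Gamma,K)_{\mathds{1}}/K \subseteq (\Hom(\Gamma,K)/K)_{\mathds{1}}$: the quotient map restricted to the connected set $\Hom(\Gamma,K)_{\mathds{1}}$ has connected image containing $q(\mathds{1})$, so that image lies in the component of $q(\mathds{1})$; moreover that image is exactly $\Hom(\Gamma,K)_{\mathds{1}}/K$ as a set, and one checks it carries the correct (subspace = quotient) topology because $q$ is open, so $q$ restricted to the saturated open-and-closed set $\Hom(\Gamma,K)_{\mathds{1}}$ is again an open quotient map. For the reverse inclusion, I would take any $\rho$ with $q(\rho)$ in the component of $q(\mathds{1})$ and show $\rho$ itself lies in a component of $\Hom(\Gamma,K)$ that maps into that component; since the components of $\Hom(\Gamma,K)$ are open and closed and $q$ is open and closed, $q$ induces a bijection between components of the source and components of the target (here is where connectedness of fibres — or at least the fact that each fibre $K\cdot\rho$ meets only one component, which holds because $K^o\cdot\rho$ is connected and the finitely many translates $kK^o\cdot\rho$ are each connected and all pass near... no: I must instead argue that $K\cdot\rho$ is contained in one component, which follows because $g\mapsto g\cdot\rho$ is continuous on $K$ and... $K$ may be disconnected). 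The correct statement: each fibre is a single orbit $K\cdot\rho$; even if disconnected, all of its points are identified to one point downstairs, so the induced map on $\pi_0$ is still injective as soon as we know distinct components of the source have disjoint images, which holds because components are saturated (a component $C$ satisfies $q^{-1}(q(C)) = K\cdot C = C$ since $C$ is $K$-invariant as a union of orbits — wait, is a component $K$-invariant? $K$ permutes components, and $K^o$ fixes each; a general $k$ could swap two components).

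The main obstacle, therefore, is precisely the possible disconnectedness of $K$: I need that the component $\Hom(\Gamma,K)_{\mathds{1}}$ of the \emph{trivial} representation is $K$-invariant, and more generally that $q$ induces an injection on $\pi_0$. The first is easy: since $\mathds{1}$ is a global fixed point, every $k\in K$ maps the component of $\mathds{1}$ to itself. For the $\pi_0$-injectivity I would use that $q$ is a closed map (compactness of $K$) and open, hence a component $C$ of the source with $C$ a union of $K$-orbits has $q(C)$ open and closed in the target; two distinct such $q(C),q(C')$ are disjoint because $C,C'$ are; and every component of $\Hom(\Gamma,K)/K$ arises this way because $q^{-1}$ of a component is open, closed, nonempty and $K$-saturated, hence a union of components of the source, and being connected-after-quotient forces... actually I will phrase it via: the preimage of $(\Hom(\Gamma,K)/K)_{\mathds{1}}$ under $q$ is open, closed, $K$-invariant, contains $\mathds{1}$, and surjects onto a connected set; then I show this preimage is connected (if it had two components $C_1 \ni \mathds{1}$, $C_2$, then since orbits are connected-through-$K^o$ and $q$ glues only within orbits, $q(C_1)$ and $q(C_2)$ would disconnect $(\Hom(\Gamma,K)/K)_{\mathds{1}}$ unless some orbit meets both, impossible as orbits lie in single components). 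Therefore the preimage equals $\Hom(\Gamma,K)_{\mathds{1}}$, and quotienting the equality $q^{-1}\big((\Hom(\Gamma,K)/K)_{\mathds{1}}\big) = \Hom(\Gamma,K)_{\mathds{1}}$ by $K$ gives the claimed homeomorphism. The final sentence of the lemma is then immediate: $\Hom(\Gamma,K)$ connected $\iff$ it equals its own identity component $\iff$ (by what we proved) $\Hom(\Gamma,K)/K$ equals its identity component $\iff$ $\Hom(\Gamma,K)/K$ connected, where the middle equivalence uses surjectivity of $q$.
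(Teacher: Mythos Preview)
Your argument is essentially correct and takes a genuinely different route from the paper's. The paper establishes $K$-invariance of $\Hom(\Gamma,K)_\mathds{1}$ by characterising that component intrinsically: via Corollary~\ref{compactabelianization} and \cite[Lemma~4.2]{Baird:CohomSpaceTuples}, a representation $\rho$ lies in $\Hom(\Gamma,K)_\mathds{1}$ if and only if $\rho(\Gamma)$ is contained in some torus of $K$, and this is a conjugation-invariant condition. You instead observe that $\mathds{1}$ is a global fixed point, so the $K$-action permutes components while fixing the one through $\mathds{1}$; from there you run a purely topological open--closed argument using compactness of $K$. Your route is more elementary and never uses nilpotency of $\Gamma$, so it actually proves the lemma for \emph{any} finitely generated $\Gamma$; the paper's route, by contrast, leans on the structural results already developed and yields an explicit description of the identity component as a byproduct.

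One slip to clean up: your final justification ``impossible as orbits lie in single components'' is false when $K$ is disconnected, since a $K$-orbit is a finite union of $K^o$-orbits which may sit in distinct components. The correct reason that $q(C_1)$ and $q(P\setminus C_1)$ are disjoint (where $P=q^{-1}\big((\Hom(\Gamma,K)/K)_\mathds{1}\big)$ and $C_1=\Hom(\Gamma,K)_\mathds{1}$) is the one you already recorded earlier: $C_1$ is $K$-saturated because $\mathds{1}$ is a fixed point, hence so is its complement in the saturated set $P$. Then $q(C_1)$ and $q(P\setminus C_1)$ are disjoint clopen subsets covering the connected space $(\Hom(\Gamma,K)/K)_\mathds{1}$, forcing $P\setminus C_1=\emptyset$. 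With this correction the argument goes through.
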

\begin{proof}
Recall from Corollary \ref{compactabelianization} that any $\rho\in\Hom(\Gamma,K)_\mathds{1}$ factors through the torsion free part of $H_1(\Gamma;\bb{Z})$. As such, by \cite[Lemma 4.2]{Baird:CohomSpaceTuples}, $\rho\in\Hom(\Gamma,K)_\mathds{1}$ if and only if there is a torus $T\subset K$ such that $\rho(\Gamma)\subset T$. 
 Since this property is preserved under conjugation by elements of $K$, it follows that $(\Hom(\Gamma,K)/K)_\mathds{1}$ coincides with the quotient $\Hom(\Gamma,K)_{\mathds{1}}/K$.
\end{proof}
We can now prove the cohomological formulas mentioned in the introduction.
\begin{named}{Corollary \ref{cor:cohomology}}
   Let $G$ be the group of complex points of a  connected reductive algebraic group, let  $T\subset G$ be a maximal algebraic torus and let $W$ be the Weyl group of $G$. If $\Gamma$ is a finitely generated nilpotent group and $F$ is a field of characteristic $0$ or relatively prime to the order of $W$, then:
   \begin{enumerate}
\item $H^{\ast}(\Hom(\Gamma,G)_\mathds{1};F)\cong H^{\ast}(G/T\times T^{\,rank\, H_1(\Gamma;\bb{Z})};F)^W,$ and
\item $H^\ast((\Hom(\Gamma,G)/\!\!/G)_\mathds{1};F)\cong H^\ast(T^{\,rank\, H_1(\Gamma;\bb{Z})};F)^W.$
\end{enumerate}
\end{named}

\begin{proof}[Proof of Corollary \ref{cor:cohomology}]
Following Pettet--Souto \cite[Corollary 1.5]{PettetSouto:CommutingTuples}, 
let $K\subset G$ be a maximal compact subgroup such that $T_K:=T\cap K$ is a maximal torus in $K$. Notice  that, for any $r\in\bb{N}$, $$
K/T_K\times T^{\,r}\raw G/T \times T^{\,r}
$$
is a $W$-equivariant homotopy equivalence and, in particular, that
\begin{equation}\label{baird}
H^\ast(K/T_K\times T^{\,r})^W\cong H^\ast(G/T \times T^{\,r})^W.
\end{equation}
Here, it follows from Baird \cite[Theorem 4.3]{Baird:CohomSpaceTuples} that the left hand side of the equation is isomorphic to $H^\ast(\Hom(\bb{Z}^r,K)_\mathds{1})$. 
Now, letting $r:={\mathrm{rank}\,H_1(\Gamma;\bb{Z})}$, our first formula follows at once from the homotopy equivalences 
$$
\Hom(\Gamma,G)_{\mathds{1}}\simeq\Hom(\bb{Z}^r,G)_\mathds{1}\simeq \Hom(\bb{Z}^r,K)_\mathds{1}
$$ provided by Corollary \ref{cor:abelianization} and \cite[Theorem I]{Bergeron:TopNilRepns}. Finally, it is also due to Baird \cite[Remark 4]{Baird:CohomSpaceTuples} that $ \Hom(\bb{Z}^r,K)_\mathds{1}/K\cong T_K^{\,r}/W$ so
our second formula follows from the homotopy equivalence and homeomorphisms
$$
(\Hom(\Gamma,G)/\!\!/G)_\mathds{1}\simeq (\Hom(\Gamma,K)/K)_\mathds{1}\cong \Hom(\Gamma,K)_\mathds{1}/K\cong \Hom(\bb{Z}^r,K)_\mathds{1}/K.
$$
provided by  \cite[Theorem II]{Bergeron:TopNilRepns}, Lemma \ref{connectedquotient} and Corollary \ref{compactabelianization}.
\end{proof}
\begin{rem}
The homotopy types of distinct components of representation spaces are typically different.  For instance, if we take $\Gamma$ to be the discrete Heisenberg group $H_3(\bb{Z})$, then $\Hom(\Gamma,\SL_2\bb{C})$ decomposes into a simply-connected component  and a non simply-connected component. In fact, this phenomenon already occurs for $\Gamma$ abelian as illustrated in  G\'omez--Adem \cite{AdemGomez:Structure} and G\'omez--Pettet--Souto \cite{GomezPettetSouto:Fundamental}.
\end{rem}

\begin{named}{Theorem \ref{thm:disconnected}}
Let $G$ be the group of complex points of a reductive algebraic group. If $\Gamma$ is a finitely generated nilpotent group which surjects onto a finite non-abelian subgroup of $G$, then $\Hom(\Gamma,G)$ and $\Hom(\Gamma,G)/\!\!/G$ are both disconnected.
\end{named}
\begin{proof}
Let $\psi:\Gamma\raw N$ be a surjective homomorphism onto a finite non-abelian subgroup of $G$ and let $K$ be a maximal compact subgroup of $G$ containing $N$. Notice in particular that $\psi\in\Hom(\Gamma,K)\subset \Hom(\Gamma,G)$. Since $\Hom(\Gamma,K)\simeq \Hom(\Gamma,G)$ and $\Hom(\Gamma,K)/K\simeq \Hom(\Gamma,G)/\!\!/G$ by \cite{Bergeron:TopNilRepns}, and since $\Hom(\Gamma,K)$ is disconnected if and only if $\Hom(\Gamma,K)/K$ is disconnected by  Lemma \ref{connectedquotient}, it suffices to prove that $\Hom(\Gamma,K)$ is disconnected.

 Seeking a contradiction, suppose that $\Hom(\Gamma,K)$ is connected and recall from  Proposition \ref{compactinduction} that, in this case, $\Hom(\Gamma/\Gamma_{(i)},K)$ is connected for all $i\geq 2$.  
Choose a minimal $s\in\bb{N}$ with the property that $\psi(\Gamma_{(s+1)})=e_K$ and denote the $s$-step nilpotent group $\Gamma/\Gamma_{(s+1)}$ by $\hat\Gamma$. If we consider the fibre bundle (c.f. Proposition \ref{bundle}) $$
p:\Hom(\hat\Gamma,K)\raw \mathcal{F}\text{, }p(\rho)=\rho(\hat\Gamma_{(s)}),
$$ then $p(\psi)=\psi(\hat\Gamma_{(s)})\neq e_K$. As such, by Proposition \ref{compactinduction} and our assumptions, 
$$
\psi\notin\Hom(\hat\Gamma,K)_{\mathds{1}}\cong\Hom(\Gamma/\Gamma_{(s+1)},K)_{\mathds{1}}\cong\Hom(\Gamma,K)_{\mathds{1}}\cong\Hom(\Gamma,K)
$$ and this contradiction completes the proof.
\end{proof}

\begin{named}{Corollary \ref{cor:heisenberg}}
Let $\Gamma$ be a non-abelian free nilpotent group or a Heisenberg group. If $G$ is the group of complex points of a reductive algebraic group, then $\Hom(\Gamma,G)$ and $\Hom(\Gamma,G)/\!\!/G$ are connected if and only if $G$ is an algebraic torus.
\end{named}
 \begin{proof}
 If $G$ is disconnected or not simply-connected then \cite[Corollary 1.3]{PettetSouto:CommutingTuples}, \cite[Theorems I and II]{Bergeron:TopNilRepns} and Lemma \ref{connectedquotient}  show that $\Hom(H_1(\Gamma;\bb{Z}),G)$ and $\Hom(H_1(\Gamma;\bb{Z}),G)/\!\!/G$ are disconnected. As such, it suffices to consider the case where $G$ is simply-connected. 
Notice that such a $G$ contains a subgroup isomorphic to $\SL_2\bb{C}$ and, since $\SL_2\bb{C}$ contains a copy of the non-abelian group $Q$ of order $8$ generated by the matrices
$$\left(\begin{array}{cc}i & 0 \\0 & -i\end{array}\right)\text{ and }\left(\begin{array}{cc}0 & 1 \\-1 & 0\end{array}\right),$$
 so does $G$.
 Since $Q$ is a $\bb{Z}/2\bb{Z}$ central extension of $\bb{Z}/2\bb{Z}\times \bb{Z}/2\bb{Z}$, it follows that if $\Gamma$ is either a non-abelian free nilpotent group or a Heisenberg group, then $\Gamma$ surjects onto $Q$. The claim now follows from Theorem \ref{thm:disconnected}.
 \end{proof}

\bibliographystyle{plain}
\bibliography{nilpotentnote}

\begin{thebibliography}{10}

\bibitem{AdemCohen:Commuting}
Alejandro Adem and Frederick~R. Cohen.
\newblock Commuting elements and spaces of homomorphisms.
\newblock {\em Math. Ann.}, 338(3):587--626, 2007.

\bibitem{AdemGomez:Structure}
Alejandro Adem and Jos{\'e}~Manuel G{\'o}mez.
\newblock On the structure of spaces of commuting elements in compact {L}ie
  groups.
\newblock In {\em Configuration spaces}, volume~14 of {\em CRM Series}, pages
  1--26. Ed. Norm., Pisa, 2012.

\bibitem{AdemGomez:Classifying_preprint}
Alejandro Adem and Jos{\'e}~Manuel G{\'o}mez.
\newblock A classifying space for commutativity in lie groups.
\newblock preprint \texttt{arXiv:1309.0128}, 2013.

\bibitem{AdemGomezLindTillman:Infinite_preprint}
Alejandro Adem, Jos{\'e}~Manuel G{\'o}mez, J.~A. Lind, and Ulrike Tillmann.
\newblock Infinite loop spaces and nilpotent k-theory.
\newblock in preparation.

\bibitem{Baird:CohomSpaceTuples}
Thomas~John Baird.
\newblock Cohomology of the space of commuting {$n$}-tuples in a compact {L}ie
  group.
\newblock {\em Algebr. Geom. Topol.}, 7:737--754, 2007.

\bibitem{Bergeron:TopNilRepns}
Maxime Bergeron.
\newblock The topology of nilpotent representations in reductive groups and
  their maximal compact subgroups.
\newblock {\em Geom. Topol.}, 19(3):1383--1407, 2015.

\bibitem{BiswasFlorentino:Character_preprint}
Indranil Biswas and Carlos Florentino.
\newblock Character varieties of virtually nilpotent k\"ahler groups and
  g-higgs bundles.
\newblock preprint \texttt{arXiv:1405.0610}, 2014.

\bibitem{BiswasLawtonRamras:Fundamental_preprint}
Indranil Biswas, Sean Lawton, and Daniel Ramras.
\newblock Fundamental groups of character varieties: surfaces and tori.
\newblock preprint; 2014.

\bibitem{Bredon:IntroTranfGps}
Glen~E. Bredon.
\newblock {\em Introduction to compact transformation groups}, volume~46 of
  {\em Pure and Applied Mathematics}.
\newblock Academic Press, New York-London, 1972.

\bibitem{BrionSchwartz:Theorie}
Michel Brion and Gerald~W. Schwarz.
\newblock {\em Th{\'e}orie des invariants \& G{\'e}om{\'e}trie des
  vari{\'e}t{\'e}s quotients}, volume~61 of {\em Travaux en Cours}.
\newblock Hermann, Paris, 2000.

\bibitem{CohenStafa:SpacesCommuting_preprint}
Frederick~R. Cohen and Mentor Stafa.
\newblock On spaces of commuting elements in lie groups.
\newblock preprint \texttt{arXiv:math.AT/1402.6309}, 2014.

\bibitem{CohenStafa:Survey}
Frederick~R. Cohen and Mentor Stafa.
\newblock A survey on spaces of homomorphisms to lie groups.
\newblock {\em arXiv preprint arXiv:1412.5668}, 2014.

\bibitem{FlorentinoLawton:TopModSpace}
Carlos Florentino and Sean Lawton.
\newblock The topology of moduli spaces of free group representations.
\newblock {\em Math. Ann.}, 345(2):453--489, 2009.

\bibitem{FlorentinoLawton:TopCharVar}
Carlos Florentino and Sean Lawton.
\newblock Topology of character varieties of {A}belian groups.
\newblock {\em Topology Appl.}, 173:32--58, 2014.

\bibitem{GomezPettetSouto:Fundamental}
Jos{\'e}~Manuel G{\'o}mez, Alexandra Pettet, and Juan Souto.
\newblock On the fundamental group of {${\rm Hom}({\mathbb{Z}}^k,G)$}.
\newblock {\em Math. Z.}, 271(1-2):33--44, 2012.

\bibitem{LawtonRamras:Covering_preprint}
Sean Lawton and Daniel Ramras.
\newblock Covering spaces of character varieties.
\newblock preprint \texttt{arXiv:math.GT/1402.0781}, 2014.

\bibitem{LubotzkyMagid:RepresentationVarieties}
Alexander Lubotzky and Andy~R. Magid.
\newblock Varieties of representations of finitely generated groups.
\newblock {\em Mem. Amer. Math. Soc.}, 58(336):xi+117, 1985.

\bibitem{OnishchikVinberg:LieGroups}
A.~L. Onishchik and {\`E}.~B. Vinberg.
\newblock {\em Lie groups and algebraic groups}.
\newblock Springer Series in Soviet Mathematics. Springer-Verlag, Berlin, 1990.
\newblock Translated from the Russian and with a preface by D. A. Leites.

\bibitem{PettetSouto:CommutingTuples}
Alexandra Pettet and Juan Souto.
\newblock Commuting tuples in reductive groups and their maximal compact
  subgroups.
\newblock {\em Geom. Topol.}, 17(5):2513--2593, 2013.

\bibitem{Richardson:CommutingVarieties}
R.~W. Richardson.
\newblock Commuting varieties of semisimple {L}ie algebras and algebraic
  groups.
\newblock {\em Compositio Math.}, 38(3):311--327, 1979.

\end{thebibliography}
\end{document}